\newtheorem{thm}{Theorem}
\newtheorem{cor}[thm]{Corollary}
\newtheorem{lem}[thm]{Lemma}
\newtheorem*{thmm}{Main Theorem}
\newcommand{\C}{{\mathbb C}}
\newcommand{\cn}{{\C^n}}
\newcommand{\dla}{d\lambda_\alpha}
\newcommand{\lta}{L^2(\cn,\dla)}
\newcommand{\fta}{F^2_\alpha}
\newcommand{\incn}{\int_\cn}
\newcommand{\e}{\varepsilon}
\newcommand{\re}{{\rm Re}}
\begin{document}

\title[Toeplitz products]{Products of Toeplitz operators\\
on the Fock space}

\author{Hong Rae Cho, Jong-Do Park, and Kehe Zhu}

\address{Cho: Department of Mathematics, Pusan National University, Pusan 609-735, Korea.}
\email{chohr@pusan.ac.kr}
\address{Park: School of Mathematics, KIAS, Hoegiro 87, Dongdaemun-gu, Seoul 130-722,  Korea.}
\email{jdpark@kias.re.kr}
\address{Zhu: Department of Mathematics and Statistics, SUNY, Albany, NY 12222, USA.}
\email{kzhu@math.albany.edu}

\begin{abstract}
Let $f$ and $g$ be functions, not identically zero, in the Fock space $\fta$ of $\cn$. 
We show that the product $T_fT_{\overline g}$ of Toeplitz operators on $\fta$ is bounded if and 
only if $f(z)=e^{q(z)}$ and $g(z)=ce^{-q(z)}$, where $c$ is a nonzero constant and $q$ is a linear polynomial.
\end{abstract}

\thanks{This work was supported by the National Research Foundation of Korea(NRF) grant funded by the Korea government(MEST) (NRF-2011-0013740 for the first author) and (NRF-2010-0011841 for the second author).}
\keywords{Toeplitz operator, Fock space, Weierstrass factorization, Berezin transform.}
\subjclass[2010]{47B35 and 30H20}

\maketitle

\section{Introduction}

Let $\cn$ be the complex $n$-space. For points $z=(z_1,\cdots,z_n)$ and 
$w=(w_1,\cdots,w_n)$ in $\cn$ we write
$$z\cdot\overline w=\sum_{j=1}^nz_j\overline{w}_j,\qquad
|z|=\sqrt{z\cdot\overline z}.$$

Let $dv$ be ordinary volume measure on $\cn$. For any positive parameter $\alpha$ we
consider the Gaussian measure
$$\dla(z)=\left(\frac\alpha\pi\right)^ne^{-\alpha|z|^2}\,dv(z).$$
The Fock space $\fta$ is the closed subspace of entire functions in 
$\lta$. The orthogonal projection $P:\lta\to\fta$ is given by
$$Pf(z)=\incn K(z,w)f(w)\,\dla(w),$$
where $K(z,w)=e^{\alpha z\cdot\overline w}$ is the reproducing kernel of $\fta$.

We say that $f$ satisfies Condition (G) if the function
$z\mapsto f(z)e^{\alpha z\cdot\overline w}$
belongs to $L^1(\cn,\dla)$ for every $w\in\cn$. Equivalently, $f$ satisfies 
Condition (G) if every translate of $f$, $z\mapsto f(z+w)$, belongs to $L^1(\cn,\dla)$.
If $f\in\fta$, then there exists a constant $C>0$ such that 
$$|f(z)|\le Ce^{\frac\alpha2|z|^2},\qquad z\in\cn.$$
This clearly implies that $f$ satisfies Condition (G).

If $f$ satisfies Condition (G), we can define a linear operator $T_f$ on $\fta$ by
$T_fg=P(fg)$, where
$$g(z)=\sum_{k=1}^Nc_kK(z,w_k)$$
is any finite linear combination of kernel functions. It is easy to see that the set 
of all finite linear combinations of kernel functions is dense in $\fta$. Here $P(fg)$ 
is to be interpreted as the following integral:
$$T_fg(z)=\incn f(w)g(w)e^{\alpha z\cdot\overline w}\,\dla(w),\qquad z\in\cn.$$
Therefore, for $g$ in a dense subset of $\fta$, $T_fg$ is a well-defined entire function
(not necessarily in $\fta$ though).

The purpose of this article is to study the Toeplitz product $T_fT_{\overline g}$, where
$f$ and $g$ are functions in $\fta$. Such a product is well defined on the set of finite
linear combinations of kernel functions. Our main concern is the following: what conditions
on $f$ and $g$ will ensure that the Toeplitz product $T_fT_{\overline g}$ extends to a bounded
(or compact) operator on $\fta$?

This problem was first raised by Sarason in \cite{S} in the context of Hardy and Bergman
spaces. It was partially solved for Toeplitz operators on the Hardy space of the unit circle 
in \cite{Zh}, on the Bergman space of the unit disk in \cite{SZ99}, on the Bergman space 
of the polydisk in \cite{SZ03}, and on the Bergman space of the unit ball in \cite{P,SZ07}. 
In all these cases, the necessary and/or sufficient condition for $T_fT_{\overline g}$ to 
be bounded is 
$$\sup_{z\in\Omega}\widetilde{|f|^{2+\e}}(z)\widetilde{|g|^{2+\e}}(z)<\infty,$$
where $\e$ is any positive number and $\widetilde f$ denotes the Berezin transform of $f$. 
Note that in the Hardy space case, the Berezin transform is nothing but the classical 
Poisson transform.

Here we obtain a much more explicit characterization for $T_fT_{\overline g}$ to be 
bounded on the Fock space.

\begin{thmm}
Let $f$ and $g$ be functions in $\fta$, not identically zero. Then $T_fT_{\overline g}$ is
bounded on $\fta$ if and only if $f=e^q$ and $g=ce^{-q}$, where $c$ is a nonzero
complex constant and $q$ is a complex linear polynomial.
\end{thmm}

Furthermore, our proof reveals that when $T_fT_{\overline g}$ is bounded, it must
be a constant times a unitary operator. Consequently, $T_fT_{\overline g}$ is never
compact unless it is the zero operator.

As another by-product of our analysis, we will construct a class of unbounded, densely 
defined, operators on the Fock space whose Berezin transform is bounded. It has been known 
that such operators exist, but our examples are very simple products of Toeplitz operators.

We wish to thank Boorim Choe and Hyungwoon Koo for useful conversations.

\section{Proof of Main Result}

For any point $a\in\cn$ we consider the operator $U_a:\fta\to\fta$ defined by
$$U_af(z)=f(z-a)k_a(z),$$
where
$$k_a(z)=\frac{K(z,a)}{\sqrt{K(a,a)}}=e^{\alpha z\cdot\overline a-\frac\alpha2|a|^2}$$
is the normalized reproducing kernel of $\fta$ at $a$. It follows from a change of variables
that each $U_a$ is a unitary operator on $\fta$.

We begin with the very special case of Toeplitz operators induced by kernel functions.

\begin{lem}
Let $a\in\cn$, $f(z)=e^{\alpha z\cdot\overline a}$, and $g(z)=e^{-\alpha z\cdot\overline a}$. 
We have
$$T_fT_{\overline g}=e^{\frac\alpha2|a|^2}U_a.$$
In particular, $T_fT_{\overline g}$ is bounded on $\fta$.
\label{lem1}
\end{lem}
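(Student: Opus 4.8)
The plan is to exploit the fact that both sides are operators determined by their action on the reproducing kernels $K(\cdot,w)$, since finite linear combinations of these span a dense subspace of $\fta$. Thus it suffices to show that $T_fT_{\overline g}K(\cdot,w)$ and $e^{\frac\alpha2|a|^2}U_aK(\cdot,w)$ are the same entire function for every $w\in\cn$. Observe first that $f$ and $g$ are themselves kernel functions ($f=K(\cdot,a)$ and $g=K(\cdot,-a)$), so they satisfy Condition (G), and every integrand appearing below will be an exponential of a linear form, hence integrable against the Gaussian measure; this legitimizes the reproducing-kernel evaluations we are about to make.

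First I would compute $T_{\overline g}K(\cdot,w)$. Writing out the defining integral and using $\overline{g(u)}=e^{-\alpha a\cdot\overline u}$, the two factors carrying $\overline u$ combine into $e^{\alpha(z-a)\cdot\overline u}=\overline{K(u,z-a)}$, while the remaining factor $e^{\alpha u\cdot\overline w}=K(u,w)$ is an entire function of $u$ lying in $\fta$. The integral $\incn K(u,w)\,\overline{K(u,z-a)}\,\dla(u)$ is then a direct application of the reproducing property with $v=z-a$, and it collapses to $K(z-a,w)=e^{\alpha(z-a)\cdot\overline w}$. In other words, $T_{\overline g}$ simply shifts the base point of the kernel by $-a$.

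Next I would apply $T_f$ to this output. After pulling the constant $e^{-\alpha a\cdot\overline w}$ out front, the factors $e^{\alpha u\cdot\overline a}$ and $e^{\alpha u\cdot\overline w}$ merge into $K(u,a+w)$, and a second use of the reproducing property against $e^{\alpha z\cdot\overline u}=\overline{K(u,z)}$ evaluates the integral at $z$, yielding $e^{-\alpha a\cdot\overline w}K(z,a+w)$. Expanding $K(z,a+w)=e^{\alpha z\cdot\overline a}e^{\alpha z\cdot\overline w}$ gives the closed form
$$T_fT_{\overline g}K(\cdot,w)(z)=e^{-\alpha a\cdot\overline w}\,e^{\alpha z\cdot\overline a}\,e^{\alpha z\cdot\overline w}.$$

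Finally, computing $e^{\frac\alpha2|a|^2}U_aK(\cdot,w)$ directly from the definition $U_ah(z)=h(z-a)k_a(z)$ produces exactly these same three exponential factors, once the Gaussian weight $e^{-\frac\alpha2|a|^2}$ built into $k_a$ cancels the scalar prefactor $e^{\frac\alpha2|a|^2}$; matching the two expressions term by term completes the identification, and boundedness follows since $U_a$ is unitary. I do not anticipate a genuine obstacle: the entire content is careful bookkeeping of the conjugations in $z\cdot\overline w$, where a stray sign or bar is the only real risk, together with the verification that each integral is a bona fide instance of the reproducing formula.
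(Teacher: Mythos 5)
Your proof is correct, and it is essentially the paper's argument: both rest on the integral representation of the Toeplitz operators and two applications of the reproducing property of $K(z,w)=e^{\alpha z\cdot\overline w}$. The only cosmetic difference is that you verify the identity on the dense span of the kernel functions $K(\cdot,w)$, whereas the paper computes $T_{\overline g}h(z)=h(z-a)$ for arbitrary $h\in\fta$ (identifying $T_{\overline g}$ as a translation and $T_f$ as multiplication by $f$) and then reads off the same conclusion.
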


\begin{proof}
To avoid triviality we assume that $a$ is nonzero. The Toeplitz 
operator $T_f$ is just multiplication by $f$, as a densely defined unbounded linear 
operator. So we focus on the operator $T_{\overline g}$.

Given any function $h\in\fta$, we have
\begin{eqnarray*}
T_{\overline g}h(z)&=&\incn\overline{g(w)}\,h(w)K(z,w)\,\dla(w)\\
&=&\incn h(w)e^{\alpha(z-a)\cdot\overline w}\,\dla(w)\\
&=&h(z-a).
\end{eqnarray*}
Therefore, the Toeplitz operator $T_{\overline g}$ is an operator of translation, and
$$T_fT_{\overline g}h(z)=e^{\alpha z\cdot\overline a}h(z-a)=e^{\frac\alpha2|a|^2}U_ah(z).$$
This proves the desired result.
\end{proof}

An immediate consequence of Lemma~\ref{lem1} is that if $f=C_1e^q$ and $g=C_2e^{-q}$, where
$C_1$ and $C_2$ are complex constants and $q$ is a complex linear polynomial, then there
exists a complex constant $c$ and a unitary operator $U$ such that $T_fT_{\overline g}=cU$.

To deal with more general symbol functions, we need the following characterization of
nonvanishing functions in $\fta$.

\begin{lem}
If $f$ is a nonvanishing function in $\fta$, then there exists a complex 
polynomial $q$, with $\deg(q)\le2$, such that $f=e^q$.
\label{lem2}
\end{lem}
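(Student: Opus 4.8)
The plan is to prove that a nonvanishing entire function $f$ in the Fock space must have the form $f = e^q$ with $\deg(q) \le 2$. Since $f$ is a nonvanishing entire function on $\cn$, the first step is to represent it as an exponential: there exists an entire function $q$ on $\cn$ with $f = e^q$. In one variable this is classical; in several variables it follows because $\cn$ is simply connected (indeed contractible), so every nonvanishing holomorphic function has a holomorphic logarithm. Thus $q = \log f$ is a well-defined entire function, and the entire problem reduces to controlling the growth of $q$ and showing it forces $q$ to be a polynomial of degree at most $2$.

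The key analytic input is the membership $f \in \fta$, which by the excerpt gives the pointwise bound $|f(z)| \le C e^{\frac\alpha2 |z|^2}$. Taking real parts of $q = \log f$, this translates directly into
$$
\re q(z) \le \log C + \frac\alpha2 |z|^2, \qquad z \in \cn.
$$
So I have an entire function $q$ whose real part is bounded above by a quadratic in $|z|$. The plan is to exploit this one-sided growth bound on $\re q$ to conclude that $q$ itself is a polynomial of degree at most $2$. The natural tool is a Borel--Carath\'eodory type estimate: for an entire function, an upper bound on its real part controls the modulus of the function (up to an additive constant depending on the value at the origin), hence controls its Taylor coefficients. Concretely, one expands $q$ in a homogeneous (or monomial) Taylor series and uses the growth of $\re q$ on spheres of radius $R$ to bound each coefficient, showing that all terms of degree $3$ or higher must vanish. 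In the one-variable formulation this is the standard fact that an entire function with $\re q(z) \le A + B|z|^2$ is a polynomial of degree at most $2$; the several-variable case can be reduced to the one-variable case by restricting $q$ to complex lines $z = \zeta v$ for fixed directions $v$ and letting $\zeta$ range over $\C$.

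The main obstacle I expect is the passage from the one-sided bound on $\re q$ to a genuine two-sided bound on $q$, which is exactly what the Borel--Carath\'eodory inequality supplies but which must be invoked correctly: an upper bound on the real part alone, not on the modulus, is all that is available, and one must be careful that this still pins down the Taylor coefficients. Once the one-variable estimate is in hand, the multivariable reduction via restriction to lines is routine, though one should verify that the degree-$2$ bound obtained uniformly over all directions $v$ assembles into a genuine polynomial of total degree at most $2$ on $\cn$ (rather than merely separate degree bounds along each line). After establishing that $q$ is a polynomial with $\deg(q) \le 2$, the lemma follows immediately, since then $f = e^q$ with $q$ of the asserted form.
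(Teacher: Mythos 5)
Your proof is correct, and it takes a genuinely different route from the paper's. The paper quotes the Weierstrass factorization theorem for the one-variable Fock space (from Zhu's book) to write a zero-free $f\in F^2_\alpha(\C)$ as $e^q$ with $q$ quadratic, iterates this in each of the $n$ variables to get $f=e^q$ with $\deg(q)\le 2n$, and only then uses the pointwise bound $|f(z)|\le Ce^{\frac\alpha2|z|^2}$ along radial rays to force $\deg(q)\le2$. You instead obtain the entire logarithm $q$ for free from the simple connectivity of $\cn$ (no factorization theory at all), and run the growth argument entirely on $q$: the Fock-space bound gives the one-sided estimate $\re q(z)\le\log C+\frac\alpha2|z|^2$, Borel--Carath\'eodory converts this into a two-sided bound $|q(z)|=O(|z|^2)$ in one variable, and slicing along complex lines together with the homogeneous expansion $q(\zeta v)=\sum_k q_k(v)\zeta^k$ kills every homogeneous component $q_k$ with $k\ge3$. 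The assembly step you flagged is indeed harmless for exactly this reason: since $q(\zeta v)$ is a polynomial of degree at most $2$ in $\zeta$ for every direction $v$, uniqueness of Taylor coefficients gives $q_k(v)=0$ for all $v$ and all $k\ge3$, hence $q_k\equiv0$ and $q=q_0+q_1+q_2$. What your route buys: it is self-contained classical complex analysis, reaches the sharp degree bound in one stroke rather than via the intermediate bound $2n$, and it avoids slicing $f$ itself, where the paper implicitly needs that $z_1\mapsto f(z_1,z_2,\dots,z_n)$ lies in $F^2_\alpha(\C)$ for every fixed $(z_2,\dots,z_n)$ --- a point that does not follow from the pointwise bound alone. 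What the paper's route buys: brevity, given the cited factorization theorem, plus extra coefficient information (the quadratic coefficient satisfies $|a|<\frac\alpha2$), which your argument does not directly produce but which the lemma does not require.
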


\begin{proof}
In the case when the dimension $n=1$, the Weierstrass factorization of functions in the Fock
space $\fta$ takes the form $f(z)=P(z)e^{q(z)}$, where $P$ is the canonical Weierstrass
product associated to the zero sequence of $f$, and $q(z)=az^2+bz+c$ is a quadratic polynomial
with $|a|<\frac\alpha2$. In particular, if $f$ is zero-free, then $f=e^q$ for some 
quadratic polynomial. See \cite{Z}.

When $n>1$, we no longer have such a nice factorization. But the absence of zeros makes
a special version of the factorization above still valid. More specifically, if $f$ is 
any function in $\fta=F^2_\alpha(\cn)$ and $f$ is nonvanishing, then the function 
$z_1\mapsto f(z_1,\cdots,z_n)$ is in $F^2_\alpha(\C)$, so by the factorization theorem
stated in the previous paragraph,
$$f(z_1,\cdots,z_n)=e^{az_1^2+bz_1+c},$$
where $a$, $b$, and $c$ are holomorphic functions of $z_2,\cdots,z_n$. Repeat this for
every independent variable, we conclude that $f=e^q$ for some polynomial of degree $2n$ 
or less.

Recall that every function $f\in\fta$ satisfies the pointwise estimate
$$|f(z)|\le Ce^{\frac\alpha2|z|^2},\qquad z\in\cn.$$
If $q$ is a polynomial of degree $N$ and $N>2$, then
for any fixed $\zeta=(\zeta_1,\cdots,\zeta_n)$ on the unit sphere of $\cn$ with each
$\zeta_k\not=0$, and for $z=r\zeta$, where $r>0$, we have $q(z)\sim r^N$ as $r\to\infty$, 
which shows that the estimate $|f(z)|\le Ce^{\frac\alpha2|z|^2}$ is impossible to hold. 
This shows that the degree of $q$ is less than or equal to $2$.
\end{proof}

We can now prove the main result, which we restate as follows.

\begin{thm}
Suppose $f$ and $g$ are functions in $\fta$. Then the Toeplitz product $T_fT_{\overline g}$
is bounded on $\fta$ if and only if one of the following two conditions holds:
\begin{enumerate}
\item[(a)] At least one of $f$ and $g$ is identically zero.
\item[(b)] There exists a linear polynomial $q$ and a nonzero constant $c$ such that 
$f=e^q$ and $g=ce^{-q}$.
\end{enumerate}
\label{thm3}
\end{thm}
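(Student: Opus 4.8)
The plan is to dispatch sufficiency quickly and then concentrate the real work on necessity, whose engine is a single identity describing the action of $T_{\overline g}$ on reproducing kernels. For sufficiency, condition (a) gives the zero operator, while condition (b) is precisely the situation covered by Lemma~\ref{lem1} and the remark following it: writing a linear $q$ as $q(z)=\alpha z\cdot\overline a+\beta_0$ and absorbing the constants into $c$, one has $f=C_1e^{\alpha z\cdot\overline a}$ and $g=C_2e^{-\alpha z\cdot\overline a}$, so $T_fT_{\overline g}=cU_a$ is a constant times a unitary, hence bounded.

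For necessity, assume $T_fT_{\overline g}$ is bounded and that neither $f$ nor $g$ vanishes identically. First I would establish the key formula $T_{\overline g}K_w=\overline{g(w)}K_w$ for every $g\in\fta$. This follows either from the differential-operator description $T_{\overline{u^\beta}}=\alpha^{-|\beta|}\partial^\beta$, obtained by computing $P(\overline u^\beta u^\gamma)$ against the monomial basis and then applying it termwise to $K_w$, or by a direct reproducing-kernel computation; one checks it is consistent with Lemma~\ref{lem1}. Since $T_f$ is just multiplication by $f$, this gives the compact formula $T_fT_{\overline g}K_w=\overline{g(w)}\,fK_w$, which powers everything that follows.

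From this single identity two consequences flow. Testing against $k_w$ and using $\langle fK_w,K_w\rangle=f(w)K(w,w)$ yields the Berezin transform $\widetilde{T_fT_{\overline g}}(w)=f(w)\overline{g(w)}$; boundedness forces $|f(w)g(w)|\le\|T_fT_{\overline g}\|$, so the entire function $fg$ is bounded, hence constant by Liouville. The zero constant would force $f$ or $g$ to vanish identically, contrary to assumption, so $fg\equiv c_0\ne0$ and therefore $f$ and $g$ are nonvanishing; Lemma~\ref{lem2} then gives $f=e^q$ and $g=ce^{-q}$ with $\deg q\le2$.

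The hard part, and the remaining obstacle, is excluding a genuine quadratic term. Here I would use the second consequence: boundedness gives $|g(w)|^2\|fK_w\|^2\le\|T_fT_{\overline g}\|^2\,\|K_w\|^2$, so the ratio $R(w):=|g(w)|^2\|fK_w\|^2/\|K_w\|^2$ must stay bounded. With $\|K_w\|^2=e^{\alpha|w|^2}$, $|g(w)|^2=|c|^2e^{-2\re q(w)}$, and $\|fK_w\|^2=\incn e^{2\re q(z)}|K_w(z)|^2\,\dla(z)$, the last integral is Gaussian and computable by completing the square, the quadratic form being negative definite exactly because $q\in\fta$ forces its quadratic coefficients to be small. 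Carrying out this evaluation, $R(w)$ equals a constant times the exponential of a quadratic in $w$ whose coefficients are governed by the quadratic part of $q$; when that part is nonzero one can pick a direction $w=t\zeta$ along which the exponent tends to $+\infty$, contradicting boundedness. Hence $q$ is linear, which is conclusion (b). I expect the multivariable bookkeeping in this Gaussian computation, and verifying that a blow-up direction persists whenever any quadratic coefficient is nonzero, to be the most delicate step.
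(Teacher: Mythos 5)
Your proposal is correct, and up through the application of Lemma~\ref{lem2} it coincides with the paper's own proof: the identity $T_{\overline g}K_w=\overline{g(w)}K_w$, the formula $\widetilde T(w)=f(w)\overline{g(w)}$, Liouville's theorem, and Lemma~\ref{lem2} all appear there in the same order. The genuine difference is in how the quadratic part $q_2$ of $q$ is excluded. The paper does this without evaluating any integral: boundedness of $T=T_fT_{\overline g}$ makes the two-variable function $T(z,w)=\langle Tk_z,k_w\rangle$ bounded, the reproducing property gives the closed form $|T(z,w)|=|f(w)g(z)|e^{-\frac\alpha2|z-w|^2}$, and writing $q_2(z)=\langle Az,z\rangle$ (real inner product, $A$ real-symmetric), the choice $z=ru$, $w=ru+v$ with $\re\langle Au,v\rangle\neq0$ keeps the Gaussian factor fixed while the symbol factor grows like $e^{2r\re\langle Au,v\rangle}$. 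You instead bound $\|Tk_w\|$, which requires computing the Gaussian integral $\|fK_w\|^2$; the step you flagged as delicate does go through, and cleanly. Indeed, let $\Sigma$ be the matrix of the real quadratic form $2\re q_2$ on ${\mathbb R}^{2n}$; then $\pm\Sigma<\alpha I$ precisely because $e^{q}$ and $e^{-q}$ both lie in $\fta$, and completing the square yields $\|Tk_w\|^2=C\exp\bigl(\eta_w^T\Sigma^2(\alpha I-\Sigma)^{-1}\eta_w+O(|w|)\bigr)$, where $\eta_w$ is $w$ viewed as a vector in ${\mathbb R}^{2n}$. Since $\Sigma^2(\alpha I-\Sigma)^{-1}$ is positive semidefinite with kernel exactly $\ker\Sigma$, any $\zeta$ with $\Sigma\zeta\neq0$ (such $\zeta$ exists exactly when $q_2\neq0$) gives blow-up along $w=t\zeta$. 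One point of order you should make explicit: the identity $\|T_fT_{\overline g}K_w\|=|g(w)|\,\|fK_w\|$ presupposes $fK_w\in\fta$, which is unclear for a general $f\in\fta$ but automatic once $f=e^q$ with $\deg q\le2$; you invoke it only after Lemma~\ref{lem2}, so your ordering is sound. As to what each approach buys: your test quantity is the stronger one (since $|\langle Tk_z,k_w\rangle|\le\|Tk_z\|$), and your computation returns the exact growth rate of $\|Tk_w\|$; the paper's two-point test buys economy, replacing the $2n$-dimensional Gaussian integral and its matrix algebra by two applications of the reproducing property and a one-line choice of points.
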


\begin{proof}
If condition (a) holds, then the Toeplitz product $T_fT_{\overline g}$ is $0$. If 
condition (b) holds, the boundedness of $T_fT_{\overline g}$ follows from Lemma~\ref{lem1}.

Next assume that $T=T_fT_{\overline g}$ is bounded on $\fta$. Then the Berezin transform 
$\widetilde T$ is a bounded function on $\cn$, where
$$\widetilde T(z)=\langle T_fT_{\overline g}k_z,k_z\rangle,\qquad z\in\cn.$$
It follows from the integral representation of $T_{\overline g}$ and the reproducing 
property of the kernel function $e^{\alpha z\cdot\overline w}$ that 
$T_{\overline g}k_z=\overline{g(z)}\,k_z$.
Therefore,
$$\widetilde T(z)=\overline{g(z)}\langle fk_z,k_z\rangle,\qquad z\in\cn.$$
Write the inner product above as an integral and apply the reproducing property of
the kernel function $e^{\alpha z\cdot\overline w}$ one more time. We obtain
$\widetilde T(z)=f(z)\overline{g(z)}$. It follows that $|f(z)g(z)|\le\|T\|$ for all $z\in\cn$.
But $fg$ is entire, so by Liouville theorem, there is a constant $c$ such that $fg=c$.

If $c=0$, then at least one of $f$ and $g$ must be identically zero, so condition (a) holds.

If $c\not=0$, then both $f$ and $g$ are nonvanishing. By Lemma~\ref{lem2}, there exists a
complex polynomial $q$, with $\deg(q)\le2$, such that $f=e^{q}$ and $g=ce^{-q}$. It remains 
for us to show that $\deg(q)\le1$.

Let us assume $\deg(q)=2$, in the hope of reaching a contradition, and write $q=q_2+q_1$, 
where $q_1$ is linear and $q_2$ is a homogeneous polynomial of degree $2$. By the 
boundedness of $T=T_fT_{\overline g}$ on $\fta$, the function
$$T(z,w)=\langle T_fT_{\overline g}k_z,k_w\rangle,\qquad z\in\cn, w\in\cn,$$
is bounded on $\cn\times\cn$. We proceed to show that this is impossible unless $q_2=0$.

Again, by the integral representation for Toeplitz operators and the reproducing property
of the kernel function $e^{\alpha z\cdot\overline w}$, it is easy to obtain that
$$T(z,w)=f(w)\overline{g(z)}e^{-\frac\alpha2|z|^2+\alpha w\cdot\overline z-\frac\alpha2|w|^2}.$$
It follows that
$$|T(z,w)|=|f(w)g(z)|e^{-\frac\alpha2|z-w|^2}$$
for all $(z,w)\in\cn\times\cn$. Using the explict form of $f$ and $g$, we can write
$$|T(z,w)|=|c\exp(q_2(w)-q_2(z)+q_1(w)-q_1(z))|e^{-\frac\alpha2|z-w|^2}.$$
Since $q_1$ is linear, it is easy to see that there is a point $a\in\cn$ such that
$$q_1(w)-q_1(z)=(w-z)\cdot\overline a$$ 
for all $z$ and $w$.

For the second-degree homogeneous polynomial $q_2$ we can find a complex matrix 
$A=A_{n\times n}$, symmetric in the real sense, such that $q_2(z)=\langle Az,z\rangle$, 
where $\langle\ ,\ \rangle$ is the real inner product. Fix two points $u$ and $v$ in $\cn$ 
such that $\re\langle Au,v\rangle\not=0$. This is possible as long as $A\not=0$. Now let 
$z=ru$ and $w=ru+v$, where $r$ is any real number. We have
\begin{eqnarray*}
q_2(w)-q_2(z)&=&q_2(z+v)-q_2(z)\\
&=&\langle A(z+v),z+v\rangle-\langle Az,z\rangle\\
&=&\langle Az,v\rangle+\langle Av,z\rangle+\langle Av,v\rangle\\
&=&2r\langle Au,v\rangle+\langle Av,v\rangle.
\end{eqnarray*}
It follows that there exists a positive constant $M=M(u,v)$ such that
$$|T(z,w)|=M|\exp(2r\langle Au,v\rangle)|=M\exp(2r\re\langle Au,v\rangle).$$
Since $\re\langle Au,v\rangle\not=0$, this shows that $T(z,w)$ cannot be a bounded 
function on $\cn\times\cn$. This contradition shows that $A=0$ and the polynomial 
$q$ must be linear.
\end{proof}

As a consequence of the analysis above, we obtain an interesting class of unbounded operators
on $\fta$ whose Berezin transforms are bounded.

\begin{cor}
Suppose $f(z)=e^q$ and $g=e^{-q}$, where $q$ is any second-degree homogeneous polynomial
whose coefficients are small enough so that $f$ and $g$ belong to $\fta$. Then the Toeplitz
product $T_fT_{\overline g}$ is unbounded on $\fta$, but its Berezin transform is bounded.
\label{cor4}
\end{cor}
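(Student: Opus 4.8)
The plan is to treat the two assertions separately, drawing on the computations already carried out in the proof of Theorem~\ref{thm3}. For the unboundedness, I would simply invoke that theorem. Here $fg=e^qe^{-q}=1$, so neither $f$ nor $g$ vanishes identically and condition (a) fails; moreover $q$ is a genuine homogeneous polynomial of degree $2$, hence not linear, so condition (b) fails as well. Since Theorem~\ref{thm3} asserts that $T_fT_{\overline g}$ is bounded precisely when (a) or (b) holds, it follows at once that $T_fT_{\overline g}$ is unbounded. Alternatively, and without appealing to the full strength of the theorem, I would reuse the final paragraph of its proof with $q_1=0$, $q_2=q$, and $c=1$: the off-diagonal quantity $T(z,w)=\langle T_fT_{\overline g}k_z,k_w\rangle$ satisfies $|T(z,w)|=M\exp(2r\,\re\langle Au,v\rangle)$ along the line $z=ru$, $w=ru+v$, which is unbounded as soon as the symmetric matrix $A$ representing $q$ is nonzero.

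For the Berezin transform, I would recall from the proof of Theorem~\ref{thm3} that the Berezin transform of $T=T_fT_{\overline g}$ is the pointwise product
$$\widetilde T(z)=f(z)\overline{g(z)},\qquad z\in\cn.$$
Substituting $f=e^q$ and $g=e^{-q}$ gives
$$\widetilde T(z)=e^{q(z)}\,\overline{e^{-q(z)}}=e^{q(z)-\overline{q(z)}}.$$
The exponent $q(z)-\overline{q(z)}=2i\,{\rm Im}\,q(z)$ is purely imaginary, so $|\widetilde T(z)|=1$ for every $z\in\cn$. Thus the Berezin transform is bounded, indeed unimodular, even though the operator itself is not.

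Both halves of the argument are essentially routine once the formula $\widetilde T=f\overline g$ and the unboundedness mechanism from Theorem~\ref{thm3} are in hand, so I do not expect a genuine obstacle. The only point requiring a moment's care is the cancellation $|e^{q-\overline q}|=1$, which rests on the observation that $q-\overline q$ takes purely imaginary values; this is special to the reciprocal choice $g=e^{-q}$ (rather than a general $g=ce^{-q}$ with $|c|\not=1$), and it is precisely what lets the Berezin transform stay bounded while the operator does not.
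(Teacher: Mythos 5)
Your proof is correct and follows essentially the same route as the paper: unboundedness by invoking Theorem~\ref{thm3}, and boundedness of the Berezin transform via the identity $\widetilde T=f\overline g$ from that theorem's proof (the paper phrases the modulus computation as $|\widetilde T|=|fg|=1$ rather than via $q-\overline q$ being purely imaginary, but these are the same observation). One caveat about your closing aside: boundedness of the Berezin transform is not special to the reciprocal choice $c=1$, since for $g=ce^{-q}$ one gets $\widetilde T=\overline c\,e^{q-\overline q}$, which has constant modulus $|c|$ and is therefore still bounded; only the unimodularity of $\widetilde T$ requires $|c|=1$.
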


\begin{proof}
By Theorem~\ref{thm3}, the operator $T_fT_{\overline g}$ is unbounded. On the other hand, by
the proof of Theorem~\ref{thm3}, the Berezin transform of $T=T_fT_{\overline g}$ is given by
$$\widetilde T(z)=f(z)\overline{g(z)},\qquad z\in\cn.$$
It follows that $|T(z)|=|f(z)g(z)|=1$ for all $z\in\cn$.
\end{proof}

Another consequence of our earlier analysis is the following.

\begin{cor}
If $f$ and $g$ are functions in $\fta$, then the following conditions are equivalent:
\begin{enumerate}
\item[(a)] $T_fT_{\overline g}$ is compact.
\item[(b)] $T_fT_{\overline g}=0$.
\item[(c)] $f=0$ or $g=0$.
\end{enumerate}
\end{cor}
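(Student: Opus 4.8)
The plan is to establish the cycle of implications $(c)\Rightarrow(b)\Rightarrow(a)\Rightarrow(c)$, of which only the last is substantive. The first two are immediate: if $f\equiv0$ then $T_f$ is the zero operator and hence so is the product $T_fT_{\overline{g}}$, while if $g\equiv0$ then $T_{\overline{g}}=0$ and again the product vanishes; this gives $(c)\Rightarrow(b)$. Since the zero operator is trivially compact, $(b)\Rightarrow(a)$ requires no argument.

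The content of the corollary lies in $(a)\Rightarrow(c)$, and here I would lean entirely on Theorem~\ref{thm3}. Assume $T_fT_{\overline{g}}$ is compact. Every compact operator is bounded, so Theorem~\ref{thm3} applies: either its condition (a) holds, meaning $f\equiv0$ or $g\equiv0$, which is precisely condition (c) of the present corollary, or its condition (b) holds, so that $f=e^q$ and $g=ce^{-q}$ for some nonzero constant $c$ and some linear polynomial $q$. The task is then to rule out this second alternative under the compactness hypothesis.

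To exclude it I would invoke the remark following Lemma~\ref{lem1}: when $f=e^q$ and $g=ce^{-q}$ with $q$ linear, the product $T_fT_{\overline{g}}$ equals $c'U$ for some nonzero constant $c'$ and some unitary operator $U$ on $\fta$. Such an operator is invertible, and since $\fta$ is infinite-dimensional, no invertible operator on it can be compact; otherwise the identity $I=\frac{1}{c'}U^{-1}(c'U)$ would be a compact operator, forcing $\fta$ to have finite dimension. Hence compactness of $T_fT_{\overline{g}}$ is incompatible with condition (b) of Theorem~\ref{thm3}, leaving only $f\equiv0$ or $g\equiv0$. This establishes $(a)\Rightarrow(c)$ and closes the cycle.

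Thus the entire proof is a bookkeeping argument resting on Theorem~\ref{thm3} together with the structural identity $T_fT_{\overline{g}}=c'U$ from the remark after Lemma~\ref{lem1}. The single genuine point, and the step I expect to be the crux, is the elementary observation that a nonzero scalar multiple of a unitary operator on an infinite-dimensional Hilbert space is never compact; once this is in place, everything else follows formally from results already established.
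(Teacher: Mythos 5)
Your proof is correct and follows essentially the same route as the paper: the paper's one-line argument ``combining Lemma~\ref{lem1} and Theorem~\ref{thm3}'' is exactly your reduction via Theorem~\ref{thm3} to the case $T_fT_{\overline g}=c'U$ with $U$ unitary, excluded by the fact that a nonzero multiple of a unitary on an infinite-dimensional Hilbert space is never compact. You have merely spelled out the details that the paper leaves implicit.
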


\begin{proof}
Combining Lemma~\ref{lem1} and Theorem~\ref{thm3}, we see that whenever $T_fT_{\overline g}$
is compact on $\fta$, we must have $f=0$ or $g=0$. This clearly gives the desired result.
\end{proof}

\section{Further Remarks}

For any $0<p\le\infty$ let $F^p_\alpha$ denote the Fock space consisting of entire functions
$f$ such that the function $f(z)e^{-\frac\alpha2|z|^2}$ belongs to $L^p(\cn,dv)$. When
$0<p<\infty$, the norm in $F^p_\alpha$ is defined by
$$\|f\|_{p,\alpha}=\left[\left(\frac{p\alpha}{2\pi}\right)^n\incn\left|f(z)e^{-\frac\alpha2
|z|^2}\right|^p\,dv(z)\right]^{\frac1p}.$$
For $p=\infty$, the norm in $F^\infty_\alpha$ is defined by
$$\|f\|_{\infty,\alpha}=\sup_{z\in\cn}|f(z)|e^{-\frac\alpha2|z|^2}.$$

It is easy to check that the normalized reproducing kernel
$$k_a(z)=e^{\alpha z\cdot\overline a-\frac\alpha2|a|^2}$$
is a unit vector in each $F^p_\alpha$, where $0<p\le\infty$. Also, it can be shown that 
the set of functions of the form
$$f(z)=\sum_{k=1}^Nc_kK(z,a_k)=\sum_{k=1}^Nc_ke^{\alpha z\cdot\overline a_k}$$
is dense in each $F^p_\alpha$, where $0<p<\infty$. See \cite{Z}.

Therefore, if $0<p<\infty$ and $f$ satisfies Condition (G), we can consider the action
of the Toeplitz operator $T_f$ (defined using the integral representation in Section 1) on
$F^p_\alpha$. Also, if $f\in F^p_\alpha$, then it satisfies the pointwise estimate
$|f(z)|\le Ce^{\frac\alpha2|z|^2}$, which implies that $f$ satisfies Condition (G).

When $1<p<\infty$ and $1/p+1/q=1$, the dual space of $F^p_\alpha$ can be identified with
$F^q_\alpha$ under the integral pairing
$$\langle f,g\rangle_\alpha=\incn f(z)\overline{g(z)}\,\dla(z).$$
When $0<p\le1$, the dual space of $F^p_\alpha$ can be identified with $F^\infty_\alpha$ under
the same integral pairing above. See \cite{JPR,Z}.

Thus for functions $f$ and $g$ in $F^p_\alpha$, if the Toeplitz product 
$T=T_fT_{\overline g}$ is bounded on $F^p_\alpha$, we can still consider the function
$$T(z,w)=\langle T_fT_{\overline g}k_z,k_w\rangle_\alpha$$
on $\cn\times\cn$. Exactly the same arguments from Section 2 will yield the following result.

\begin{thm}
Suppose $0<p<\infty$. If $f$ and $g$ are functions in $F^p_\alpha$, not identically zero,
then the Toeplitz product $T_fT_{\overline g}$ is bounded on $F^p_\alpha$ if and only if
$f=e^q$ and $g=ce^{-q}$, where $c$ is a nonzero complex contant and $q$ is a complex linear
polynomial.
\end{thm}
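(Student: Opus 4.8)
The plan is to reproduce the argument of Theorem~\ref{thm3} almost verbatim, the only change being that the Hilbert-space inner product is replaced throughout by the integral pairing $\langle\,\cdot\,,\,\cdot\,\rangle_\alpha$ that identifies the dual of $F^p_\alpha$ with $F^q_\alpha$ (when $1<p<\infty$ and $1/p+1/q=1$) or with $F^\infty_\alpha$ (when $0<p\le1$). The sufficiency direction requires essentially no new work. If $q$ is a complex linear polynomial, then $f=e^q=C_1e^{\alpha z\cdot\overline a}$ and $g=ce^{-q}=C_2e^{-\alpha z\cdot\overline a}$ for a suitable $a\in\cn$ and constants $C_1,C_2$, since every complex-linear functional on $\cn$ has the form $z\mapsto\alpha z\cdot\overline a$. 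The computation in Lemma~\ref{lem1} uses only the integral representation of $T_{\overline g}$ and the reproducing property on kernel functions, neither of which depends on $p$, so it still gives $T_fT_{\overline g}=c'U_a$ for some constant $c'$. A direct computation shows $|U_ah(z)|e^{-\frac\alpha2|z|^2}=|h(z-a)|e^{-\frac\alpha2|z-a|^2}$, and a change of variables then gives $\|U_ah\|_{p,\alpha}=\|h\|_{p,\alpha}$; thus $U_a$ is an isometry of $F^p_\alpha$ for every $p$ and $T_fT_{\overline g}$ is bounded.

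For the necessity direction I would assume that $T=T_fT_{\overline g}$ is bounded on $F^p_\alpha$ and introduce
$$T(z,w)=\langle T_fT_{\overline g}k_z,k_w\rangle_\alpha,\qquad(z,w)\in\cn\times\cn.$$
The crux, and the one place where the value of $p$ enters, is showing that $T(z,w)$ is a bounded function on $\cn\times\cn$. Here the Cauchy--Schwarz estimate of the case $p=2$ is replaced by the duality inequality $|\langle h,u\rangle_\alpha|\le C\|h\|_{p,\alpha}\|u\|_{q,\alpha}$, together with the facts recorded in the Further Remarks that each $k_z$ is a unit vector in every $F^p_\alpha$ and that $F^q_\alpha$ (or $F^\infty_\alpha$) realizes the dual of $F^p_\alpha$. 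Since boundedness of $T$ gives $\|Tk_z\|_{p,\alpha}\le\|T\|$, we obtain $|T(z,w)|\le C\|T\|$ for all $z$ and $w$. The subtlety to watch is that for $0<p\le1$ the space $F^p_\alpha$ is only a quasi-Banach space, so the pairing must be taken against $F^\infty_\alpha$ and a harmless constant $C$ may appear; but boundedness of $T$ still means $\|Th\|_{p,\alpha}\le\|T\|\,\|h\|_{p,\alpha}$, which is all the estimate needs.

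Once $T(z,w)$ is known to be bounded, the remainder of the proof is insensitive to $p$ and runs exactly as in Theorem~\ref{thm3}. The identities $T_{\overline g}k_z=\overline{g(z)}\,k_z$ and
$$T(z,w)=f(w)\overline{g(z)}\,e^{-\frac\alpha2|z|^2+\alpha w\cdot\overline z-\frac\alpha2|w|^2}$$
follow purely from the integral representation of the Toeplitz operators and the reproducing property, whence $|T(z,w)|=|f(w)g(z)|\,e^{-\frac\alpha2|z-w|^2}$. Setting $w=z$ gives $|f(z)g(z)|\le C\|T\|$ on $\cn$, so $fg$ is a bounded entire function and Liouville's theorem forces $fg\equiv c$; since $f$ and $g$ are not identically zero, $c\ne0$ and both functions are zero-free. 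Lemma~\ref{lem2} then yields $f=e^q$ and $g=ce^{-q}$ with $\deg(q)\le2$, and the quadratic part is ruled out exactly as before: writing $q=q_2+q_1$ with $q_2(z)=\langle Az,z\rangle$, choosing $u$ and $v$ with $\re\langle Au,v\rangle\ne0$, and taking $z=ru$ and $w=ru+v$ keeps $|z-w|=|v|$ fixed while forcing $|T(z,w)|$ to grow like $\exp(2r\,\re\langle Au,v\rangle)$, contradicting the boundedness of $T(z,w)$. Hence $A=0$, $q$ is linear, and the characterization follows.
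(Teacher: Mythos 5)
Your proof is correct and takes essentially the same approach as the paper: the paper's entire proof of this theorem consists of the remark that ``exactly the same arguments from Section 2'' apply once the Hilbert-space inner product is replaced by the duality pairing $\langle\cdot,\cdot\rangle_\alpha$ and one uses that each $k_z$ is a unit vector in every $F^p_\alpha$. Your write-up simply supplies the details the paper leaves implicit, namely the isometry of $U_a$ on $F^p_\alpha$ for the sufficiency direction and the duality estimate giving boundedness of $T(z,w)$ for the necessity direction.
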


It would be nice to extend our results here to more general Fock-type spaces. In particular,
generalization to the Fock-Sobolev spaces studied in \cite{CCK, CZ} should be possible.

It would also be interesting to take a second look at the original Hardy space setting.
More specifically, if $f$ and $g$ are functions in the Hardy space $H^2$ (of the unit disk,
for example), the the boundedness of the Toeplitz product $T_fT_{\overline g}$ on $H^2$
implies that the product function $fg$ is in $H^\infty$. Is it possible to derive more
detailed information about $f$ and $g$, say in terms of inner and outer functions? A more
explicit condition on $f$ and $g$ (as opposed to the condition $\widetilde{|f|^{2+\e}}
\widetilde{|g|^{2+\e}}\in L^\infty$) would certainly be more desirable.

We hope that this paper will generate some further interest in this subject.


\end{document}